\let\textacute\'
\newtheorem{Lemma}      {Lemma} [section]
\newtheorem{Theorem}    [Lemma] {Theorem}
\newtheorem*{Conjecture}{The non-geometric Goldschmidt-Sims conjecture}
\newtheorem{Corollary}  [Lemma] {Corollary}
\theoremstyle{definition}
\numberwithin{equation}{section}
\newcommand{\core}{\mathop{\mathrm{core}}}
\newcommand{\Aut}{\mathop{\mathrm{Aut}}}
\newcommand{\Inn}{\mathop{\mathrm{Inn}}}
\newcommand{\Inndiag}{\mathop{\mathrm{Inndiag}}}
\newcommand{\ol}{\overline}
\numberwithin{equation}{section}
\newcommand\blfootnote[1]{%
  \begingroup
  \renewcommand\thefootnote{}\footnote{#1}%
  \addtocounter{footnote}{-1}%
  \endgroup
}
\begin{document}
\title{Some simplifications in the proof of the Sims conjecture}
\author{L\'aszl\'o Pyber\thanks{pyber@renyi.hu} }
\author{Gareth Tracey\thanks{gtracey@renyi.hu}}
\affil{Alfr\'ed R\'enyi Institute of Mathematics, Hungarian Academy of Sciences, Re\'altanoda utca
13-15, H-1053, Budapest, Hungary}

\maketitle

\begin{abstract}
\noindent We prove an elementary lemma concerning primitive amalgams and use it to greatly
simplify the proof of the Sims conjecture in the case of almost simple groups.    
\end{abstract}

\section{Introduction
\blfootnote{The work of the authors on the project leading to this application has received
funding from the European Research Council (ERC) under the European Unions Horizon 2020 research
and innovation programme (grant agreement No. 741420).}}
Over the last forty years or so,
the Classification of Finite Simple Groups (henceforth abbreviated to CFSG) has been greatly utilized to resolve a number of open questions and conjectures in finite group theory. One of the most well-known of these is the \emph{Sims conjecture}, first proposed by Charles Sims in \cite{Sims}. The conjecture states that if $G$ is a primitive permutation group, and $h>1$ is the length of a non-trivial orbit of a point stabilizer $H$ in $G$, then $H$ has order bounded above by a function of $h$. 

This conjecture was proved by P.J. Cameron, C.E. Praeger, J. Saxl, and G.M. Seitz in \cite{CPSS}. Therefore, it is now a theorem, and reads precisely as follows.
\begin{Theorem}\label{thm:Sims}
There exists a function $f:\mathbb{Z}\rightarrow \mathbb{R}$ such that whenever $G$ is a primitive permutation group, and $h>1$ is the length of a non-trivial orbit of a point stabilizer $H$ in $G$, then $|H|\le f(h)$.
\end{Theorem}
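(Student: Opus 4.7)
The plan is to follow the high-level strategy of Cameron, Praeger, Saxl, and Seitz: first reduce to primitive groups of almost simple type via the O'Nan--Scott theorem, and then settle that case using CFSG together with the new amalgam lemma promised in the abstract.

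First I would attach the standard amalgam to the problem. Fix $\alpha\in\Omega$, put $H=G_\alpha$, choose $\beta$ in an $H$-orbit of length $h>1$, and set $K=G_\beta$ and $L=H\cap K=G_{\alpha,\beta}$. Orbit-stabilizer gives $[H:L]=h$. Since $G$ is primitive, $H$ is maximal in $G$, so $\langle H,K\rangle=G$, and no non-trivial normal subgroup of $G$ lies in $L$. Thus $(H,K,L)$ is a primitive amalgam with $[H:L]=h$, and the goal is to bound $|H|$ by a function of $h$.

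Next, I would run the O'Nan--Scott reduction. In the affine case, $H$ is a subgroup of $\mathrm{GL}(V)$ (with $V=\soc(G)$) whose action on $V\setminus\{0\}$ has an orbit of length $h$, and a direct argument bounds $|H|$. In the product action, diagonal and twisted wreath cases, a careful analysis of how suborbits of length $h$ project onto the simple factors of $\soc(G)$ bounds the number of such factors in terms of $h$ and reduces the problem to a primitive almost simple action with subdegree at most $h$. Thus we may assume $G$ is almost simple with simple socle $S$.

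In the almost simple case, I would apply the new amalgam lemma to $(H,K,L)$. One expects it to yield uniform structural information --- for example bounding the orders of certain chief factors of $H$ above a suitable core, or controlling the local structure of $L$ --- that, combined with the CFSG-based description of maximal subgroups of almost simple groups, forces the Lie rank, field size, or alternating degree of $S$ to be bounded by a function of $h$. A bound on $|H|$ in terms of $h$ then follows by direct inspection of the maximal subgroup list. The principal gain over the original CPSS argument is that this structural input applies uniformly across the Aschbacher classes, rather than requiring a separate ad hoc analysis in each family.

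The main obstacle will be the almost simple case when $S$ is of Lie type and $H\cap S$ is a parabolic of large rank, because then $|H|$ is close to $|S|$ and only tight arithmetic information on the amalgam $(H,K,L)$ can constrain the rank and field size. The value of the promised lemma lies precisely in packaging such information into a single elementary statement, replacing the lengthy local analyses used in the original proof and reducing the remaining work to an essentially mechanical check against the classification of maximal subgroups.
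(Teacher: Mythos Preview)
Your O'Nan--Scott reduction is fine, but the almost simple case has real gaps and misidentifies where the new lemma does its work.

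First, the Thompson--Wielandt theorem is entirely absent from your plan, and it is the indispensable entry point: it supplies a prime $p$ with $|H:O_p(H)|$ $h$-bounded. The amalgam lemma of the paper (Theorem~\ref{thm:Gold0}) does not extract structural information from nothing; its hypothesis is that $H$ (or $H\cap I$ for a normal $I\trianglelefteq G$) is already \emph{abelian by $(h,k)$-bounded}, and its conclusion is that $|H|$ (respectively the exponent of $H\cap I$) is then bounded. It does not bound Lie rank or field size, and it does not replace the maximal-subgroup classification across the Aschbacher classes as you hope.

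Second, the paper does \emph{not} eliminate the case analysis in the almost simple case. Steps (1), (3) and (4) of the original CPSS proof --- the reduction, the same-characteristic case, and the classical groups in cross characteristic --- are kept verbatim by citation. The new lemma is applied only at Steps (2) and (5). For Step (2), once $|O_p(H)\cap\Inndiag(X)|$ is bounded, the fact that $\Aut(X)/\Inndiag(X)$ has a normal subgroup of order at most $6$ with cyclic quotient makes $H$ cyclic-by-bounded, and Theorem~\ref{thm:Gold0}(i) finishes. For Step (5), in cross characteristic with $h$-bounded Lie rank, a torus-normalizer argument (via Springer--Steinberg) shows $O_p(H)\cap\Inndiag(X)$ is abelian-by-bounded of bounded subgroup rank and bounded derived length; Theorem~\ref{thm:Gold0}(ii) then bounds its exponent, hence its order, and Step (2) applies.

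In particular, the situation you single out as the ``main obstacle'' --- $H\cap S$ a parabolic of large rank --- is the same-characteristic case (Step (3)), which CPSS already handles in two short paragraphs and which the paper does not touch. The genuine difficulty removed by the new lemma is the dependence on Seitz's root-subgroup results in Steps (2) and (5), not the parabolic case.
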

In this note, we present a simplified proof of Theorem \ref{thm:Sims}. Our proof does still require the CFSG, but the more involved methods used in \cite[page 502 and Section 4]{CPSS} are not required.

As a by-product of our approach, we also make progress on a more general version of the Sims conjecture, which we will refer to as the \emph{non-geometric Goldschmidt-Sims conjecture}, following Goldschmidt's work in \cite{Gold}.

To state this, we first need to introduce the language of \emph{primitive amalgams}. A triple of finite groups ($H,M,K$), usually written $H> M<K$,  is called a \emph{primitive amalgam} if $M$ is a subgroup of both $H$ and $K$ and:
\begin{enumerate}[(i)]
    \item Whenever $A$ is a normal subgroup of $H$ contained in $M$, we have $N_K(A)=M$; and 
    \item whenever $B$ is a normal subgroup of $K$ contained in $M$, we have $N_H(B)=M$.
\end{enumerate}
The non-geometric Goldschmidt-Sims conjecture reads as follows:
\begin{Conjecture}
There exists a function $f:\mathbb{Z}\times\mathbb{Z}\rightarrow\mathbb{R}$ such that whenever $H>M<K$ is a primitive amalgam with $|H:M|=h$ and $|K:M|=k$, then $|M|\le f(h,k)$.
\end{Conjecture}
We remark that the conjecture as stated above is referred to as the `baby Goldschmidt-Sims problem' by Fan in \cite{FanJA}. The general `Goldschmidt-Sims problem' seeks to classify all primitive amalgams, usually under certain additional conditions (for example, the case where $M$ is a $p$-group and $|H:M|$, $|K:M|$ are both prime and distinct from $p$ is dealt with in \cite{FanJA}). We also remark that if ($H,M,K$) is a primitive amalgam, then there is an associated graph $\Gamma=\Gamma(H,M,K)$ which encodes many properties of the amalgam. The `geometric Goldschmidt-Sims conjecture' is then a statement about this graph. See \cite{Fan, Gold} for more details.

Notice that if $G$ is a primitive permutation group, and $G_{\alpha}$ and $G_{\beta}$ are point stabilizers in $G$, then $G_{\alpha}>G_{\alpha}\cap G_{\beta}<G_{\beta}$ is a primitive amalgam. Thus, the non-geometric Goldschmidt-Sims conjecture really is a generalization of the Sims conjecture.

A number of papers (see \cite{FanS,Fan,Knapp,Thompson,Weiss,Wie,VanBon}) have addressed and/or made progress on the non-geometric Goldschmidt-Sims conjecture. In particular a result of of Wielandt (published by Knapp in \cite[Theorem 2.1]{Knapp}), extending an earlier result of Thompson \cite[Main Theorem]{Thompson}, implies the so-called \emph{Thompson--Wielandt theorem} (see Theorem \ref{thm:Thompson} below) which asserts that if $H>M<K$ is a primitive amalgam, then there exists a prime $p$ such that $|H:O_p(H)|$ can be bounded in terms of $|H:M|$ and $|K:M|$. We remark that 
\cite[Theorem 2.1]{Knapp} is only stated in the case where $H$ and $K$ are maximal subgroups in a finite group $G$, and $M:=H\cap K$ is core-free in $G$. It can be readily seen, however, that the proof works in the more general setting of primitive amalgams stated in Theorem \ref{thm:Thompson} below.  

Our contribution to the non-geometric Goldschmidt-Sims conjecture reads as follows.
\begin{Theorem}\label{thm:Gold}
There exists a function $f:\mathbb{Z}\times\mathbb{Z}\rightarrow\mathbb{R}$ such that whenever $H>M<K$ is a primitive amalgam with $|H:M|=h$ and $|K:M|=k$, and either $H$ or $K$ is abelian by $(h,k)$-bounded, then $|H|\le f(h,k)$ and $|K|\le f(h,k)$.
\end{Theorem}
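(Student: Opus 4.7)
Since $|M|\le|H|$ and $|K|=k|M|$, it suffices to bound $|H|$; by the symmetric roles of $H$ and $K$ in the hypothesis we may assume $H$ is abelian-by-$(h,k)$-bounded, and fix a normal abelian subgroup $A\triangleleft H$ with $|H/A|$ of $(h,k)$-bounded order.

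Theorem~\ref{thm:Thompson} supplies a prime $p$ for which $|H/O_p(H)|$ is $(h,k)$-bounded. Setting $P:=A\cap O_p(H)$ yields a normal abelian $p$-subgroup of $H$ with $|H/P|$ of $(h,k)$-bounded order. Since $H/\core_H(M)$ embeds in $\Sym(h)$ and hence has order at most $h!$, the further intersection $U:=P\cap \core_H(M)$ is a normal abelian $p$-subgroup of $H$, contained in $M$, with $|H/U|$ still of $(h,k)$-bounded order. The theorem therefore reduces to bounding $|U|$.

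By primitivity condition~(i), $N_K(U)=M$, and since $U$ is abelian, $C_K(U)\le M$. Because $U\le C_M(U)\le M$ and $|M/U|$ is bounded, $M/C_M(U)=M/C_K(U)$ injects into $\Aut(U)$ as a bounded-order subgroup. However, this alone does not constrain $|U|$: a small group can act faithfully on an arbitrarily large abelian $p$-group. The additional control must come from primitivity condition~(ii) together with the structure of the $k$ distinct $K$-conjugates $U=U_1,\dots,U_k$ of $U$ (pairwise distinct since $N_K(U)=M\ne K$), each lying inside a distinct conjugate of $M$; this is where the authors' elementary lemma from the abstract is presumably invoked. A natural starting point is to examine $V:=U\cap Z(H)$: it is $H$-central, lies in $M$, and primitivity forces $C_K(V)=M$, so that $K/M$ injects faithfully into $\Aut(V)$. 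Combining this with a second application of Theorem~\ref{thm:Thompson} to the symmetric amalgam $K>M<H$ -- which yields a prime $q$ (possibly distinct from $p$) with $|K/O_q(K)|$ of $(h,k)$-bounded order -- and with a careful analysis of the normal closure $\langle U^K\rangle$ in $K$, one hopes to force both the exponent and the rank of $U$ to be bounded. Carrying out this last step cleanly, via the authors' elementary lemma, is the principal obstacle.
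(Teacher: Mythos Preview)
Your proposal is not a proof: you yourself identify the final step---bounding $|U|$---as ``the principal obstacle'', and nothing you write after that point closes the gap. Taking a \emph{normal} abelian subgroup $A\trianglelefteq H$ and pushing down to $U=A\cap O_p(H)\cap\core_H(M)$ leaves you with a large abelian $p$-group on which a small group acts; as you observe, that does not control $|U|$. Your suggested route via $V=U\cap Z(H)$ does not help either: nothing prevents $V$ from being trivial, so the assertion $C_K(V)=M$ need not hold, and the subsequent analysis of $\langle U^K\rangle$ remains speculative.

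The paper's argument avoids all of this and is both shorter and independent of the Thompson--Wielandt theorem. The decisive move is to take $A$ to be an abelian subgroup of $H$ of \emph{maximal order} (not a normal one), so that $|H:A|$ is still $(h,k)$-bounded. Now look at $C:=\core_K(K\cap A)$. If $C=1$, then $K$ acts faithfully on the cosets of $K\cap A$, a set of size at most $k\cdot|H:A|$, and one is done. If $C\neq 1$, then $C\trianglelefteq K$ with $C\le M$, so primitivity gives $N_H(C)=M$; but $A$ is abelian and $C\le A$, hence $A\le N_H(C)=M$. Thus every maximal-order abelian subgroup of $H$ lies in $M$, forcing $J(H)=J(M)$. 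The symmetric argument gives $J(K)=J(M)$, so $J(H)=J(K)$ is a common normal subgroup of $H$ and $K$ contained in $M$, contradicting primitivity. The Thompson-subgroup trick is precisely the ``elementary lemma'' you were looking for, and it makes the reduction to $p$-groups via Theorem~\ref{thm:Thompson} unnecessary.
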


We remark that in Theorem \ref{thm:Gold}, and throughout the paper, the terminology \emph{$X$ is $(n_1,\hdots,n_r)$-bounded}, for natural numbers $X,n_1,\hdots,n_r$, means that $X$ can be bounded by a function of $n_1,\hdots,n_r$. All other notation used in the paper is standard, though for a finite simple group $X$ of Lie type, we will use $\Inndiag(X)$ to denote the subgroup of $\Aut(X)$ generated by the group $\Inn(X)$ of inner automorphisms of $X$, together with the set of diagonal automorphisms of $X$. That is, $\Inndiag(X)=\langle \Inn(X),\alpha\text{ : }\alpha\text{ a diagonal automorphism of }X\rangle$.

\section{Proofs of Theorems \ref{thm:Sims} and \ref{thm:Gold}}
We begin preparations toward the proof of Theorem \ref{thm:Gold}, and the simplified proof of Theorem \ref{thm:Sims}, with the Thompson--Wielandt theorem \cite{Knapp,Thompson} mentioned in the introduction. 
\begin{Theorem}\label{thm:Thompson}
Let $H>M<K$ be a primitive amalgam, with $|H:M|=h$ and $|K:M|=k$. Then there exists a prime $p$ such that $p$, $|H:O_p(H)|$, and $|K:O_p(K)|$ are all $(h,k)$-bounded.
\end{Theorem}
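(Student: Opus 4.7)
The plan is to translate the primitive amalgam into an edge-transitive action on a connected graph and then apply Wielandt's theorem from \cite{Knapp} in that setting. I would begin by forming the coset graph $\Gamma$: set $G = H *_M K$, the amalgamated free product, and let $\Gamma$ be the bipartite graph with vertex set $(G/H) \sqcup (G/K)$ and edges $\{gH, g'K\}$ whenever $gH \cap g'K \neq \emptyset$. Then $\Gamma$ is connected, $G$ acts edge-transitively, and for a fixed base edge $\{v, w\}$ we may identify $G_v = H$, $G_w = K$, $G_{vw} = M$; the valencies at $v$ and $w$ are $h$ and $k$ respectively. Writing $G_v^{[1]}$ for the kernel of $G_v$ on $\Gamma(v)$, one has $G_v^{[1]} = \core_H(M)$ and $G_w^{[1]} = \core_K(M)$, whence the easy bounds $|G_v : G_v^{[1]}| \le h!$ and $|G_w : G_w^{[1]}| \le k!$. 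Moreover, applying conditions (i) and (ii) of the primitive-amalgam definition to $A = G_v^{[1]}$ and $B = G_w^{[1]}$ yields the normalizer equalities $N_{G_w}(G_v^{[1]}) = G_{vw} = N_{G_v}(G_w^{[1]})$.

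The main step, and the principal obstacle, is Wielandt's theorem in this setting: the subgroup $Q := G_v^{[1]} \cap G_w^{[1]}$ is a $p$-group for some prime $p$, and moreover $p$ is $(h, k)$-bounded. The proof is a delicate combinatorial argument within $\Gamma$, using the edge-transitivity of $G$ together with the normalizer equalities above to derive a contradiction from the assumption that $|Q|$ has two distinct prime divisors: one transports Sylow subgroups of $Q$ along paths in $\Gamma$ via edge-transitivity, and the normalizer equalities then preclude the resulting configuration of subgroups. A careful version of the same argument additionally bounds $p$ in terms of the orders of the permutation groups induced by $G_{vw}$ on $\Gamma(v)$ and $\Gamma(w)$, which are at most $h!$ and $k!$.

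The remainder is a short cleanup. Since $G_v^{[1]}$ and $G_w^{[1]}$ are both normal in $G_{vw}$, so is $Q$; in particular $Q \trianglelefteq G_v^{[1]}$, and being a $p$-group, $Q$ lies in $O_p(G_v^{[1]})$. As $O_p(G_v^{[1]})$ is characteristic in $G_v^{[1]} \trianglelefteq G_v$, it is normal in $G_v$ and hence contained in $O_p(G_v)$; thus $Q \le O_p(G_v)$. By the second isomorphism theorem,
\begin{equation*}
|G_v^{[1]} : Q| = |G_v^{[1]} G_w^{[1]} : G_w^{[1]}| \le |G_{vw} : G_w^{[1]}| \le k!,
\end{equation*}
so $|G_v : O_p(G_v)| \le |G_v : Q| = |G_v : G_v^{[1]}| \cdot |G_v^{[1]} : Q| \le h! \cdot k!$, and symmetrically $|G_w : O_p(G_w)| \le h! \cdot k!$. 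Translating back to $H$ and $K$ yields the theorem.
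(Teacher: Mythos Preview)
The paper does not supply a proof of this theorem at all: it is quoted as the Thompson--Wielandt theorem with references to \cite{Knapp,Thompson}, together with the remark that although \cite[Theorem~2.1]{Knapp} is stated only for maximal subgroups $H,K$ of a finite group with $H\cap K$ core-free, ``the proof works in the more general setting of primitive amalgams''. So there is no in-paper argument to compare against.

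Your proposal is exactly the standard way of making that remark precise, and the framework and cleanup are correct. Passing to the Bass--Serre tree of $H *_M K$ turns $H,K$ into vertex stabilizers and $M$ into an edge stabilizer of an edge-transitive action on a connected (indeed, tree) graph; the identities $G_v^{[1]}=\core_H(M)$, $G_w^{[1]}=\core_K(M)$ are right, and the primitive-amalgam axioms give the normalizer equalities $N_{G_w}(G_v^{[1]})=G_{vw}=N_{G_v}(G_w^{[1]})$ once $G_v^{[1]},G_w^{[1]}\neq 1$ (if either is trivial, $|H|\le h!$ or $|K|\le k!$ and the conclusion is immediate). Your index computations $|G_v:G_v^{[1]}|\le h!$ and $|G_v^{[1]}:Q|=|G_v^{[1]}G_w^{[1]}:G_w^{[1]}|\le |M:\core_K(M)|\le k!$ are correct, as is the chain $Q\trianglelefteq G_v^{[1]}$, $Q\le O_p(G_v^{[1]})\ \mathrm{char}\ G_v^{[1]}\trianglelefteq G_v$, hence $Q\le O_p(G_v)$.

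The one substantive step---that $Q=G_v^{[1]}\cap G_w^{[1]}$ is a $p$-group with $p$ bounded---you explicitly leave as a black-box invocation of Wielandt's argument. That is precisely what the paper does too, so in this respect your proposal and the paper are on equal footing; you have simply spelled out the translation that the paper asserts ``can be readily seen''.
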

\noindent We remark that the proof of the Thompson--Wielandt theorem does not require the CFSG.

We now discuss the proof of the Sims conjecture in \cite{CPSS}. The proof has five steps:
\begin{enumerate}[(1)]
    \item Reduction to the case where the primitive group $G$ in question is almost simple with socle $X$ a finite group of Lie type.
    \item Proof that it suffices to bound $|O_p(H)\cap\Inndiag(X)|$ in terms of $h$, where $p$ is as in the Thompson--Wielandt theorem.
    \item Proof that $|H|$ is $h$-bounded if $X$ has characteristic $p$.
    \item Proof that $|H|$ is $h$-bounded if $X$ is a classical group in $p'$ characteristic.
    \item Proof that $|H|$ is $h$-bounded if $X$ is a finite group of Lie type in $p'$ characteristic, and of $h$-bounded (Lie) rank.
\end{enumerate}
Steps (1), (3), and (4) make use of classical results in group and representation theory, and the theory of algebraic groups. We will not repeat the details here, and instead refer the reader to \cite[Section 1 and first three paragraphs of Section 2]{CPSS} for Step (1); the two paragraphs in \cite[Section 3]{CPSS} for Step (3); and \cite[Section 4a]{CPSS} for Step (4).

Steps (2) and (5) are much more involved, and rely on delicate results of Seitz \cite{Seitz} from the theory of algebraic groups (see page 502 and Section 4 of \cite{CPSS} for the details). For this reason, our aim in this note is to replace the proofs in Steps (2) and (5) with much simpler arguments.

Our next result, part (i) of which is Theorem \ref{thm:Gold}, will allow us to do so. We note that within its proof, we frequently use the standard fact that if $A$, $B$, and $C$ are subgroups of a group $G$ with $C$ a finite index subgroup of $B$, then $|A\cap B:A\cap C|\le |B:C|$. We will also use the notation $J(X)$ for the \emph{Thompson subgroup} of the finite group $X$. That is, $J(X):=\langle A<X\text{ : }A\text{ an abelian subgroup of }X\text{ of maximal order}\rangle$. We remark that the Thompson subgroup is usually only defined for $p$-groups, but our proofs here require the more general version.
\begin{Theorem}\label{thm:Gold0}
Let $H>M<K$ be a primitive amalgam with $|H:M|=h$ and $|K:M|=k$.
\begin{enumerate}[(i)]
    \item Suppose that $H$ is abelian by $(h,k)$-bounded. Then $|H|$, and hence $|K|$, is $(h,k)$-bounded.
    \item Suppose that $H$ and $K$ are subgroups of a finite group $G$, and that $I$ is a normal subgroup of $G$. If
    $H\cap I$ and $K\cap I$ are abelian by $(h,k)$-bounded, then the exponents of $H\cap I$ and $K\cap I$ are $(h,k)$-bounded.
\end{enumerate}
\end{Theorem}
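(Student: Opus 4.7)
The plan is to prove both parts by reducing, via the Thompson--Wielandt theorem (Theorem~\ref{thm:Thompson}), to questions about abelian normal $p$-subgroups for the prime $p$ supplied by that theorem, and then invoking the primitive amalgam conditions via characteristic subgroups constructed from the Thompson subgroup $J$ together with the core of $M$ in $H$ (and $K$). The four key ingredients are: Thompson--Wielandt, the characteristicity of $J(X)$, the index fact $|A\cap B:A\cap C|\le|B:C|$, and the observation that $|H:\core_H(M)|\le h!$ (by the action of $H$ on the $h$ cosets of $M$), which is $(h,k)$-bounded.

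First, using Thompson--Wielandt, set $P:=O_p(H)$ and $Q:=O_p(K)$, so $p$, $|H:P|$ and $|K:Q|$ are all $(h,k)$-bounded. For part (i), if $N\trianglelefteq H$ is abelian with $|H/N|\le c(h,k)$, then $N\cap P$ is abelian, normal in $H$, $p$-primary, and of bounded index in $H$ (by the index fact), so it suffices to bound $|N\cap P|$. For part (ii), the Thompson--Wielandt bound on $|H:P|$ together with the bounded index of $A_H\trianglelefteq H\cap I$ reduces the task to bounding orders of $p$-elements in $A_H$: every element of $A_H$ has an $(h,k)$-bounded power in $P\cap A_H$.

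Second, the abelian-by-bounded hypothesis allows control of the Thompson subgroup: the index fact shows that each maximal abelian subgroup of $P$ has $(h,k)$-bounded index in $P$, so $|P:J(P)|$ is bounded, and $J(P)$, being characteristic in $P$, is normal in $H$. To apply the primitive amalgam condition, which requires a normal subgroup of $H$ contained in $M$, I intersect with the core: $B:=J(P)\cap\core_H(M)$ is normal in $H$, contained in $M$, and of $(h,k)$-bounded index in $H$. The primitive amalgam condition then yields $N_K(B)=M$, so $B$ has exactly $k$ conjugates in $K$. A symmetric construction on the $K$-side (applicable when $K\cap I$ is abelian-by-bounded, as in part (ii)) gives $B':=J(Q)\cap\core_K(M)$ normal in $K$ and contained in $M$, with $N_H(B')=M$. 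For part (ii), the coset pigeon-hole on $H/M$ combined with the bounded index of $A_K$ in $K\cap I$ shows that every element of $A_H$ has an $(h,k)$-bounded power in $A_H\cap A_K\subseteq M\cap I$; this reduces bounding the exponent of $A_H$ to bounding the exponent of this intersection, to which the $B,B'$ analysis should apply.

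The main obstacle is extracting an effective numerical bound from the primitive amalgam conclusion $N_K(B)=M$: knowing that $B$ has $k$ conjugates in $K$ does not in itself bound $|B|$. The key is to combine this with the Thompson--Wielandt bound on $|K:Q|$ and with the bounded index of $J(Q)$ in $Q$: the $k$ conjugates of $B$ in $K$ are $p$-subgroups of $M$ interacting with $J(Q)$ and with $B'$, and the interlocking of these characteristic subgroups, together with the fact that each conjugate of $B$ is generated by abelian subgroups of bounded index in $P$, should force $B$ into a $(h,k)$-bounded quotient of $Q$, giving the required bound on $|N\cap P|$ in part (i) and, via the pigeon-hole reduction, on the exponent of $A_H\cap A_K$ in part (ii). The elementary-but-delicate aspect of the paper's argument presumably lies in this interlocking step.
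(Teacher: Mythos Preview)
Your proposal is incomplete, and you yourself identify the gap: from $N_K(B)=M$ you cannot extract a bound on $|B|$, and the ``interlocking'' you hope for is not specified. This is the heart of the matter, and the paper resolves it by a different mechanism than the one you are groping towards.

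The paper's argument does not try to bound anything in the second case; it derives a \emph{contradiction}. The dichotomy is organised differently from yours. Take an abelian subgroup $A$ of $N$ (where $N=H$ in (i), $N=H\cap I$ in (ii)) of maximal order; since $N$ is abelian-by-bounded, $|N:A|$ is bounded. Now either $\core_K(K\cap A)=1$, in which case $K$ (hence $L$) embeds as a permutation group of bounded degree and you get the order bound (in (i)) or exponent bound (in (ii)) directly; or $C:=\core_K(K\cap A)\neq 1$, in which case $A\le C_H(C)\le N_H(C)=M$ by the amalgam condition applied to $C\trianglelefteq K$. If the second alternative holds for \emph{every} maximal abelian $A\le N$, then $J(N)\le M$, and since $M\cap I\le N$ the maximal abelian subgroups of $N$ and of $M\cap I$ coincide, giving $J(N)=J(M\cap I)$ (or $J(H)=J(M)$ in (i)). The symmetric argument on the $K$-side gives $J(L)=J(M\cap I)$ as well. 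But $J(N)$ is characteristic in $N\trianglelefteq H$, hence normal in $H$, and likewise $J(L)\trianglelefteq K$; since $J(N)=J(L)\le M$, the amalgam condition forces $N_K(J(N))=M$, contradicting $J(N)\trianglelefteq K$.

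Two further remarks. First, the paper's proof does not use Thompson--Wielandt at all; your reduction to $O_p$ is an unnecessary detour. Second, your device of intersecting $J(P)$ with $\core_H(M)$ to force containment in $M$ is exactly what prevents the symmetry argument from working: the intersection is artificial and there is no reason $J(P)\cap\core_H(M)$ should equal $J(Q)\cap\core_K(M)$. The paper instead shows that, in the bad case, $J(N)$ already lies in $M$ \emph{naturally}, and this is what makes it equal to $J(M\cap I)$ and hence to $J(L)$.
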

\begin{proof}
Adopt the notation of the statement of the theorem, and set $N:=H$, $L:=K$, in case (i), and $N:=H\cap I$, $L:=K\cap I$, in case (ii).

Let $A$ be an abelian subgroup of $N$ of maximal order. Then $|N:A|$ is $(h,k)$-bounded, since $N$ is abelian by $(h,k)$-bounded. Suppose now that $K\cap A$ is core-free in $K$. Then $K$ is a transitive permutation group of degree $|K:K\cap A|= |K:K\cap N||K\cap N:K\cap A|\le |K:K\cap N||N:A|$.
If we are in case (i), then $N=H$ and $|K|$ can be bounded in terms of $|K:K\cap H||H:A|\le k|H:A|$. Thus, $|K|$, and hence $|H|$, is $(h,k)$-bounded. If we are in case (ii), then $\core_K(L\cap A)=1$. Thus, $K$ is a permutation group of degree $|K:L\cap A|$. The groups induced by the normal subgroup $L$ on each of its orbits are all permutation groups of degree $|L:L\cap A|$, and are all isomorphic to each other. Hence, the exponent of $L$ is $|L:L\cap A|$-bounded. But $|L:L\cap A|=|L:L\cap N||L\cap N:L\cap A|=|I\cap K:I\cap M||L\cap N:L\cap A|\le |K:M||N:A|=k|N:A|$. Thus, $L$ has $(h,k)$-bounded exponent. Since $|N:N\cap L|=|I\cap H:I\cap M|\le h$, it follows that $N$ has $(h,k)$-bounded exponent. 

Thus, we may assume that $C:=\core_K(K\cap A)$ is non-trivial. Since $A$ is abelian and $C\le A$, we have $A\le N_H(C)$, and since the amalgam is primitive, we have $N_H(C)=M$. Thus, $A\le M$. Hence, $J(N)$ is contained in $M\le H$. It follows that $J(H)=J(M)$ in case (i), and $J(N)=J(M\cap I)$ in case (ii). The same argument with $H$ replaced by $K$ (and $N$ replaced by $L$ in case (ii)), however, also shows that $J(K)=J(M)$ in case (i), and $J(L)=J(M\cap I)$ in case (ii). But then we have, in either case, that $J(N)=J(L)$ is normal in both $H$ and $K$ - a contradiction. This completes the proof.    
\end{proof}
\noindent We remark that the proof of Theorem \ref{thm:Gold0} does not require the Thompson--Wielandt theorem. Also, part (i) above proves Theorem \ref{thm:Gold}.

We can now replace Steps (2) and (5) in the proof of the Sims conjecture with the following easy consequences of Theorem \ref{thm:Gold0}(i).
We begin with Step (2):
\begin{Corollary}\label{cor:Step2}
Let $G$ be an almost simple primitive permutation group, with point stabilizer $H$, and let $h>1$ be the length of a non-trivial $h$-orbit. Suppose that the socle $X$ of $G$ is a finite group of Lie type, and let $p$ be the prime from the Thompson--Wielandt theorem. If $|O_p(H)\cap \Inndiag(X)|$ is $h$-bounded, then $|H|$ is $h$-bounded.
\end{Corollary}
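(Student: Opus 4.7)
The plan is to verify the hypothesis of Theorem \ref{thm:Gold0}(i) and apply it. Let $\beta$ lie in a non-trivial $H$-orbit of length $h$, and set $K := G_\beta$, $M := H \cap K$. Primitivity of $G$ makes $H > M < K$ into a primitive amalgam with $|H:M|=|K:M|=h$. It therefore suffices to prove that $H$ is abelian by $h$-bounded, after which Theorem \ref{thm:Gold0}(i) immediately gives that $|H|$ is $h$-bounded.

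By Theorem \ref{thm:Thompson} (Thompson--Wielandt), $|H:O_p(H)|$ is $h$-bounded, so the task reduces to showing that $O_p(H)$ has an abelian subgroup of $h$-bounded index. Put $N := O_p(H) \cap \Inndiag(X)$: by hypothesis $|N|$ is $h$-bounded, and $O_p(H)/N$ embeds as a $p$-subgroup of $\Aut(X)/\Inndiag(X)$. The key structural input is that for any finite simple group of Lie type $X$, the quotient $\Aut(X)/\Inndiag(X)$ has a normal cyclic subgroup $\Phi$ (the field automorphisms) of index at most $6$. Consequently the image of $O_p(H)$ in $\Aut(X)/\Inndiag(X)$ contains a cyclic subgroup of index at most $6$, and pulling back gives $C \le O_p(H)$ with $N \le C$, $C/N$ cyclic, and $|O_p(H):C| \le 6$.

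Now write $C = N\langle g \rangle$. Since the conjugation action of $\langle g \rangle$ on $N$ factors through $\Aut(N)$, whose order is bounded in terms of $|N|$ and hence is $h$-bounded, setting $m := |\Aut(N)|$ gives $g^m \in Z(C)$. Hence $\langle g^m \rangle$ is a cyclic (abelian) subgroup of $C$, and a direct count yields $|C:\langle g^m \rangle| \le m \cdot |N|$, which is $h$-bounded. Thus $H$ contains an abelian subgroup of $h$-bounded index, and Theorem \ref{thm:Gold0}(i) concludes the argument.

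The main obstacle is the structural input on $\Aut(X)/\Inndiag(X)$, namely the existence of a cyclic subgroup of uniformly bounded index; this requires the classification of finite simple groups, with the only mildly delicate case being $D_4$ where the graph automorphism group is $S_3$.
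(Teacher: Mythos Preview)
Your proof is correct and follows essentially the same route as the paper's: reduce to showing $H$ is abelian (in fact cyclic) by $h$-bounded using Thompson--Wielandt, the hypothesis on $|O_p(H)\cap\Inndiag(X)|$, and the structure of $\Aut(X)/\Inndiag(X)$, then invoke Theorem~\ref{thm:Gold0}(i). The paper compresses your steps 3--7 into one sentence, and phrases the structural input dually (a normal subgroup of order at most $6$ with cyclic quotient, rather than your cyclic normal subgroup of index at most $6$), but either formulation yields a cyclic subgroup of bounded index in $O_p(H)/N$ and hence the same conclusion.
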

\begin{proof}
Since $\Aut(X)/\Inndiag(X)$ has a normal subgroup of order at most $6$ with cyclic quotient, the Thompson--Wielandt theorem and the hypothesis of the corollary guarantee that $H$ is cyclic by $h$-bounded. Theorem \ref{thm:Gold0}(i) then yields the result. 
\end{proof}

Next, Step (5):
\begin{Corollary}\label{cor:Step5}
Let $G$ be an almost simple primitive permutation group, with point stabilizer $H$, and let $h>1$ be the length of a non-trivial $h$-orbit. Let $p$ be the prime from the Thompson--Wielandt theorem, and suppose that the socle $X$ of $G$ is a finite group of Lie type in $p'$ characteristic, and that $X$ has $h$-bounded (Lie) rank. Then $|H|$ is $h$-bounded.
\end{Corollary}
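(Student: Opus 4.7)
The plan is to apply Corollary~\ref{cor:Step2}, so it suffices to show that $Q:=O_p(H)\cap\Inndiag(X)$ has $h$-bounded order. To set up Theorem~\ref{thm:Gold0}(ii), I pick a point $\beta$ in a non-trivial $H$-orbit of length $h$ and set $K:=G_\beta$, so that $H>H\cap K<K$ is a primitive amalgam with both indices equal to $h$. I take $I:=G\cap\Inndiag(X)$, which is normal in $G$ since $\Inndiag(X)$ is normal in $\Aut(X)$, and I apply Theorem~\ref{thm:Gold0}(ii) to this amalgam with this $I$.

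The key verification is that $H\cap I$ (and symmetrically $K\cap I$) is abelian by $h$-bounded. By the Thompson--Wielandt theorem $|H:O_p(H)|$ is $h$-bounded, hence so is $|H\cap I:Q|$. It therefore suffices to show that $Q$ itself is abelian by $h$-bounded; but $Q$ is a $p$-subgroup of $\Inndiag(X)$ in cross characteristic, and a standard structural fact---that a Sylow $p$-subgroup of a group of Lie type in $p'$ characteristic lies in the normaliser of a maximal torus with quotient a $p$-subgroup of the Weyl group---shows that $Q$ has an abelian normal subgroup of index bounded in terms of the Lie rank of $X$, hence $h$-bounded. Theorem~\ref{thm:Gold0}(ii) then yields that $H\cap I$, and in particular $Q$, has $h$-bounded exponent.

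To finish, I invoke a second classical Lie-theoretic fact: any abelian $p$-subgroup of $\Inndiag(X)$ (for $p$ coprime to the defining characteristic) has rank bounded in terms of the Lie rank of $X$, because it embeds, after a Weyl-bounded extension, in a maximal torus. Every abelian subgroup of $Q$ therefore has $h$-bounded exponent and $h$-bounded rank, hence $h$-bounded order. Combined with the abelian-by-$h$-bounded structure of $Q$, this yields that $|Q|$ is $h$-bounded, and Corollary~\ref{cor:Step2} completes the argument. The main obstacle is simply citing the two standard Lie-theoretic facts cleanly---namely, (a) cross-characteristic Sylow $p$-subgroups of $\Inndiag(X)$ are abelian by (Lie rank)-bounded, and (b) abelian $p$-subgroups have rank bounded by the Lie rank---which are what replace Seitz's delicate algebraic-group arguments from~\cite{CPSS}. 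The genuinely new ingredient is Theorem~\ref{thm:Gold0}(ii), which converts ``abelian-by-bounded'' into an exponent bound and hence lets a $p$-rank bound suffice.
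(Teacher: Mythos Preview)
Your proposal is correct and follows essentially the same route as the paper's proof: set up the amalgam $H>H\cap K<K$ with $I=G\cap\Inndiag(X)$, use the Springer--Steinberg fact that a cross-characteristic $p$-subgroup of $\Inndiag(X)$ normalises a maximal torus (so is abelian-by-Weyl-bounded with bounded $p$-rank), feed the resulting ``abelian by $h$-bounded'' structure of $H\cap I$ into Theorem~\ref{thm:Gold0}(ii) to get an exponent bound, and then combine exponent and rank bounds to bound $|O_p(H)\cap\Inndiag(X)|$ before invoking Corollary~\ref{cor:Step2}. The only cosmetic difference is that the paper records the consequences of the torus-normaliser fact as three separate items (abelian-by-bounded, bounded subgroup rank, bounded derived length) and cites \cite[II, Theorem~5.16]{SSt} explicitly, whereas you phrase them as two ``standard Lie-theoretic facts''; the underlying input and the logical flow are the same.
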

\begin{proof} 
Write $h=|H:H\cap H^g|$, for an element $g$ of $G$, and set $K:=H^g$, $M:=H\cap K$, and $I:=\Inndiag(G)$. Also, we may write $X=O^{p'}(\ol{X}_{\sigma})$ for a simple algebraic group $\ol{X}$ of adjoint type and a Steinberg endomorphism $\sigma$ of $\ol{X}$.
Since $p$ is coprime to the defining characteristic of $X$, \cite[II, Theorem 5.16]{SSt} implies that $O_p(H)\cap I$ normalizes of a maximal torus $\ol{T}$ in $\ol{X}$. Then $(O_p(H)\cap I)/(O_p(H)\cap I\cap \ol{T})$ is a subgroup of the Weyl group $W$ of $X$. Since the Lie rank of $X$ is $h$-bounded, the order of the group $W$ is $h$-bounded. We deduce that (a) $O_p(H)\cap I$ is abelian by $h$-bounded; (b) $O_p(H)\cap I$ has $h$-bounded subgroup rank; and (c) $O_p(H)\cap I$ has bounded derived length. 

It follows from (a) and the Thompson--Wielandt theorem that $H\cap I$ and $K\cap I=(H\cap I)^g$ are abelian by $h$-bounded.
Since $H>M<K$ is a primitive amalgam, Theorem \ref{thm:Gold0}(ii) then implies that $H\cap I$ and $K\cap I$ have $h$-bounded exponents. This, together with (b) and (c) above, implies that $|O_p(H)\cap I|$ and $|O_p(K)\cap I| $ are $h$-bounded. The result now follows from Corollary \ref{cor:Step2}.
\end{proof}
This completes our simplified proof of the Sims conjecture.

\end{document}